\documentclass[12pt]{amsart}
\usepackage{amsmath,amsfonts,amssymb,amsthm}
\usepackage[mathscr]{eucal}
\theoremstyle{plain}
\newtheorem{theorem}{Theorem}[section]
\newtheorem{corollary}[theorem]{Corollary}

\theoremstyle{definition}
\newtheorem{definition}[theorem]{Definition}

\newtheorem{example}[theorem]{Example}
\newtheorem{remark}[theorem]{Remark}

\numberwithin{equation}{section}

\def\R{\mathbb{R}}
\def\Rn{\mathbb{R}^n}
\def\Rp{\mathbb{R}_+}
\def\M{\mathfrak M}

\def\K{\mathcal{K}}
\def\r{\rho}
\def\ra{\rho_{{}_A}}
\def\rgc{\rho_{{}_{\Gamma_C}}}

\def\L{L_A(\Omega)}
\def\lr{L_\rho}
\def\l{\lambda}
\def\O{\Omega}
\def\i{\infty}
\def\A{\mathcal A}
\begin{document}
\title[The Hardy operator between Orlicz spaces]{The Hardy averaging operator between Orlicz spaces and a new gauge functional characterizing a given Orlicz space}
\author{Amiran Gogatishvili}
\address[Amiran Gogatishvili]{Institute of Mathematics, Academy of Science of the Czech Republic, \v Zitn\'a 25, 11567 Prague 1, Czech Republic}
\email[]{gogatish@math.cas.cz}
\author{Ron Kerman}
\address[Ron Kerman]{Department of Mathematics, Brock University, 500 Glendridge Ave., St.~Catharines, Ontario, Canada L2S 3A1}
\email[]{rkerman@brocku.ca}
\author{S. Spektor}
\address[S. Spektor]{Quantitative Science Department, Canisius University, 2001 Main Street, Buffalo, NY 14208-1098, USA}
\email[]{spektors@canisius.edu}

\thanks{The research of the first author was partially supported by the grant no.~23-04720S of the Czech Science Foundation (GA\v{C}R). The Institute of Mathematics, CAS, is supported by RVO:67985840 and by the Shota Rustaveli National Science Foundation (SRNSF), grant no.~FR 22-17770.}
\thanks{The research of the second author was supported by NSERC grant A4021.}
\thanks{This paper is in final form, and no version of it will be submitted for publication elsewhere.}

\date{}
\subjclass[2020]{Primary 42B25, 46E30; Secondary 46B40}
\keywords{Orlicz space, gauge functional, Hardy-Littlewood maximal operator, $\K$-method of interpolation}

\begin{abstract}
We prove that the largest Orlicz space into which the Hardy averaging operator maps coincides with the largest rearrangement-invariant (r.i.) space mapping into that space; in other words, the optimal Orlicz domain is automatically the optimal r.i.\ domain. More generally, we show that every Luxemburg--Orlicz norm is equivalent to a sublinear functional which makes more computable a certain expression for a dual norm arising in the $\K$-theory of interpolation. As applications we obtain Orlicz-space mapping properties for the Hardy--Littlewood maximal function, approximate identities and Calder\'on--Zygmund singular integral operators. These mapping properties are shown to be optimal for the Hardy-Littlewood maximal function and the approximate identities.
\end{abstract}
\maketitle

\section{Introduction}

The Hardy averaging operator, $P$, is given by
\[
(Pf)(t)=t^{-1}\int_0^t f(s)\, ds, \quad t>0,
\]
in which $f\in \M_+(\Rp)$, the class of nonnegative measurable functions on $\Rp:=(0, \infty)$.

This paper deals, in the first instance, with the mapping properties of $P$ between Orlicz spaces. Such a space is defined in terms of a Young function
\[
A(t):=\int_{0}^{t}a(s)\,ds, \qquad t\in \Rp,
\]
where $a(s)$ is a nondecreasing function mapping $\Rp$ onto itself. Let $(\O,\mu)$ be a $\sigma$-finite measure space and denote by $\M(\O)$ the set of $\mu$-measurable functions on $\O$. Then, the \emph{Orlicz space}, $\L$, consists of all $f \in \M(\O)$ such that
\[
\int_{\O} A\!\left(\frac{|f(x)|}{\lambda_f}\right)d\mu(x) <\infty
\]
for some $\lambda_f>0$ (see \cite[p.~49]{RR}).

The Luxemburg--Orlicz norm of an $f\in L_A(\Omega)$ is
\begin{equation}\label{gn}
\rho_A(f):=\inf\left\{\lambda>0:\ \int_{\Omega}A\!\left(\frac{|f(x)|}{\lambda}\right)d\mu(x)\le 1\right\}.
\end{equation}
See \cite[p.~54]{RR} for the interesting history of \eqref{gn}.

Boyd in \cite{Boyd} proved, among other things, that the Hilbert transform
\[
(Hf)(x)=\lim_{\epsilon \to 0^+}\frac{1}{\pi}\int_{|x-y|>\epsilon}\frac{f(y)}{x-y}\, dy, \quad x\in \R,
\]
is bounded between the Orlicz spaces $L_A(\R)$ and $L_B(\R)$ if and only if $P$ is bounded between $L_A(\Rp)$ and $L_B(\Rp)$, and between $L_{\widetilde A}(\Rp)$ and $L_{\widetilde B}(\Rp)$, where
\[
\widetilde{A}(t)=\int_0^t a^{-1}(s)\, ds, \quad t\in \Rp,
\]
is the Young function complementary to $A$.

Boyd went on to restrict attention to the case $A=B$. The optimal-Orlicz-domain question that motivates this paper has been studied in a number of works: by Gogatishvili \cite{Gog93} in the equivalent form of the Hardy--Littlewood maximal operator, by Cianchi \cite{Ci, ci99}, and by Kerman--Rawat--Singh \cite{KRS}; see also \cite{KP} for related Sobolev imbedding results. The following characterization is essentially contained in these references and is the starting point of our analysis: $P$ maps $L_B(\Rp)$ into $L_A(\Rp)$ if and only if there exists $K>0$ such that
\begin{equation}\label{eq:OrliczChar}
t\int_0^t a(s)\frac{ds}{s} \;\le\; B(Kt), \qquad t>0.
\end{equation}
Indeed, integration by parts (valid under the integrability condition $\int_0^k A(s)\,s^{-2}\,ds<\infty$ adopted from \eqref{new1.2} below, which makes the boundary term at $0$ vanish) gives the exact identity
\begin{equation}\label{eq:IBP}
t\int_0^t a(s)\frac{ds}{s}\;=\;A(t) + t\int_0^t \frac{A(s)}{s^2}\,ds, \qquad t>0,
\end{equation}
so that \eqref{eq:OrliczChar} reads
\[
A(t) + t\int_0^t \frac{A(s)}{s^2}\,ds \le B(Kt).
\]
In the symmetric case $A=B$, this is equivalent (up to a constant in $K$, using only $A(t)\le A(Kt)$) to $t\int_0^t A(s)\,s^{-2}\,ds\le A(K't)$, which is precisely condition~(13) in Gogatishvili \cite[Proposition~2 and Theorem~3]{Gog93} (with $\phi$ there playing the role of~$A$), as well as condition~$(*)$ in Cianchi's papers \cite{Ci, ci99}; see also \cite[Theorem~1.3]{KRS}.

But,
\[
\mathcal{A}(t):=t\int_0^t a(s)\frac{ds}{s}, \quad t>0,
\]
is itself a Young function, since
\[
\mathcal{A}'(t)=\int_0^t a(s)\frac{ds}{s}+a(t)
\]
is a nondecreasing function mapping $\Rp$ onto itself.

Condition \eqref{eq:OrliczChar} asserts
\[
\mathcal{A}(t)\le B(Kt), \qquad t>0,
\]
which is equivalent to
\[
L_{\mathcal A}(\Rp)\supset L_B(\Rp),
\]
so that $L_{\mathcal A}(\Rp)$ is the largest Orlicz space that $P$ maps into $L_A(\Rp)$.

The nonincreasing rearrangement, $f^*$, of $f\in \M(\Rp)$ is defined by
\[
f^*(t):=\inf\{\l>0: \mu(\{ x\in \Rp: |f(x)|>\l\})\le t\},\quad t\in \Rp,
\]
and belongs to $L_A(\Rp)$ whenever $f$ does. Moreover,
\[
(Pf)(t)\le (Pf^*)(t), \qquad t\in \Rp.
\]
The class
\[
L_{\Gamma_{\mathcal A}}(\Rp):=\{f \in \M(\Rp): \rho_{\Gamma_{\mathcal A}}(f):=\ra(Pf^*)< \infty\}
\]
is the largest rearrangement-invariant set of functions which $P$ maps into $L_A(\Rp)$.

Our first result asserts that $L_{\Gamma_{\mathcal A}}(\Rp)=L_{\mathcal A}(\Rp)$. In other words, the optimal Orlicz domain and the optimal rearrangement-invariant domain coincide. %$P:\,\cdot\,\to L_A(\Rp)$ 

\begin{theorem}\label{th_1.1}
Let $A(t)=\int_0^t a(s)\,ds$, $t>0$, be a Young function satisfying
\begin{equation}\label{new1.2}
\int_{\Rp}A\!\left(\frac{k}{1+t}\right)dt< \infty
\end{equation}
for some $k>0$. (Equivalently, $\int_0^k A(s)\,s^{-2}\,ds<\infty$.) Set
\[
\mathcal A(t)=t\int_0^t a(s)\frac{ds}{s}
\]
and
\[
\rho_{\Gamma_{\mathcal A}}(f)=\ra(Pf^*).
\]
Then $\mathcal A$ is a Young function satisfying
\begin{equation}\label{1.2}
\tfrac12\,\rho_{\Gamma_{\mathcal A}}(f)\le \rho_{\mathcal A}(f)\le \rho_{\Gamma_{\mathcal A}}(f), \qquad f\in \M(\Rp).
\end{equation}
\end{theorem}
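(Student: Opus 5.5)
The plan is to compare the modular integrals $\int_0^\infty \mathcal A(g)$ and $\int_0^\infty A(Pg)$ for nonincreasing $g\ge 0$ directly, through the layer-cake formula, with no integration by parts. The $\lambda$ in \eqref{gn} scales both sides homogeneously, so the two norm inequalities in \eqref{1.2} will follow from two modular inequalities.

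\emph{Reductions.} First, $\mathcal A$ is a Young function: this is the computation of $\mathcal A'$ in the Introduction. The hypothesis \eqref{new1.2} guarantees that $\int_0^t a(s)\,ds/s$ is finite. Next, $\rho_{\mathcal A}$ is rearrangement invariant, because $f$ and $f^*$ are equimeasurable. So we may replace $f$ by $g=f^*$, which is nonincreasing and right-continuous, and put $h=Pg$. Then $h$ is nonincreasing and continuous, and $g\le h$. Write
\[
\mu(s)=|\{g>s\}|,\qquad \nu(s)=|\{h>s\}|.
\]
Then $\{g>s\}=(0,\mu(s))$ and $\mu\le\nu$. Whenever $0<\nu(s)<\infty$, continuity of $h$ gives $h(\nu(s))=s$, that is,
\[
\int_0^{\nu(s)}g=s\,\nu(s).
\]

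\emph{Key formulas.} By Fubini applied to $\mathcal A(g(t))=g(t)\int_0^{g(t)}a(s)\,ds/s$,
\[
\int_0^\infty \mathcal A(g(t))\,dt=\int_0^\infty \frac{a(s)}{s}\int_{\{g>s\}}g(t)\,dt\,ds=\int_0^\infty\frac{a(s)}{s}\int_0^{\mu(s)}g\,ds .
\]
Likewise, the layer-cake formula gives $\int_0^\infty A(h)=\int_0^\infty a(s)\nu(s)\,ds$. Using the identity above, this equals $\int_0^\infty \frac{a(s)}{s}\int_0^{\nu(s)}g\,ds$.

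\emph{The two inequalities.}
\begin{itemize}
\item Upper bound. Since $\mu\le\nu$, the two formulas give $\int\mathcal A(g)\le\int A(Pg)$. Applying this to $g/\lambda$ with $\lambda=\rho_A(Pf^*)$ yields $\rho_{\mathcal A}(f)\le\rho_{\Gamma_{\mathcal A}}(f)$.
\item Lower bound. We have $|\{h/2>s\}|=\nu(2s)$. Split the integral at $\mu(s)$, using that $g\le s$ beyond $\mu(s)$:
\[
2s\,\nu(2s)=\int_0^{\nu(2s)}g\le\int_0^{\mu(s)}g+s\,\nu(2s).
\]
Hence $s\,\nu(2s)\le\int_0^{\mu(s)}g$. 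Therefore
\[
\int_0^\infty A(h/2)=\int_0^\infty a(s)\nu(2s)\,ds\le\int_0^\infty\frac{a(s)}{s}\int_0^{\mu(s)}g\,ds=\int_0^\infty\mathcal A(g).
\]
Scaling by $\lambda=\rho_{\mathcal A}(f)$ gives $\rho_A(Pf^*)\le 2\rho_{\mathcal A}(f)$.
\end{itemize}

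\emph{Main obstacle.} I expect the main difficulty to be the degenerate cases in the identity $\int_0^{\nu}g=s\nu$, namely $\nu(s)=\infty$ or $\nu(s)=0$.
\begin{itemize}
\item If $\nu(s)=0$, then both sides of each comparison vanish, so there is nothing to prove.
\item If $\nu(s)=\infty$ on a set of $s$ of positive measure with $a(s)>0$, then $\int A(h)=\infty$ and the upper bound is trivial.
\item In the lower bound, $\nu(2s)=\infty$ for such $s$ would force $\int_0^\infty g=\infty$ relative to $s$. Then $\mu(s)\ge\nu(2s)/2$ is infinite in the limit, so $\int\mathcal A(g)=\infty$ as well.
\end{itemize}
These cases should be checked with a truncation ($g\wedge n$ restricted to $(0,n)$) and monotone convergence, rather than argued pointwise.
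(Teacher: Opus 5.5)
Your argument is correct and is essentially the paper's proof. The paper deduces Theorem~\ref{th_1.1} from Theorem~\ref{newth_1.2} with $c=a$, $C=A$ (Corollary~\ref{cor_3.1}), using the same layer-cake representation with weight $a(s)/s$ and the two level-set comparisons \eqref{3.1}; these are exactly your $\int_0^{\mu(s)}g\le s\,\nu(s)$ and $s\,\nu(2s)\le\int_0^{\mu(s)}g$. You get the second by splitting at $\mu(s)$, whereas the paper uses the truncation $f^*=f_\tau+f^\tau$ and the weak type $(1,1)$ bound for $P$.

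One small fix is needed in your degenerate-case remark: $\mu(s)\ge\nu(2s)/2$ is false in general (take $g=M\chi_{(0,1)}$ and $s<M/4$). Instead, if $\nu(2s)=\infty$, then $2st<\int_0^t g\le\int_0^{\mu(s)}g+st$ for every $t>0$, which forces $\int_0^{\mu(s)}g=\infty$. So the pointwise inequality $a(s)\nu(2s)\le \frac{a(s)}{s}\int_0^{\mu(s)}g$ holds in $[0,\infty]$, and no truncation is needed.
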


\begin{remark}\label{rem_motiv1.2}
Making the change of variable $s=k/(1+t)$ in $\int_{\Rp}A\!\left(\frac{k}{1+t}\right)dt<\infty $ yields $k\int_0^k A(s)\,s^{-2}\,ds<\infty$, from which the parenthetical assertion in Theorem~\ref{th_1.1} follows.

In turn, using only the elementary inequalities $A(t)/t\le a(t)\le A(2t)/t$ which are consequences of $a$ being nondecreasing, one checks that
\[
\int_0^k A(s)\,s^{-2}\,ds<\infty \quad\Longleftrightarrow\quad \int_0^k a(s)\,s^{-1}\,ds<\infty.
\]
This last condition is precisely the integrability at the origin needed to guarantee that
\[
\mathcal A(t)=t\int_0^t a(s)\,s^{-1}\,ds
\]
is finite for every $t>0$. We have chosen to state our hypothesis in the form \eqref{new1.2} because it is precisely the form appearing in the duality results of \cite{KMS} on which Section~\ref{sec:interp} relies. The reader who finds the original form opaque may use either of the two equivalent forms above. We emphasize that \emph{no $\Delta_2$ condition on $A$ is required} for any of the equivalences just stated --- only that $a=A'$ is nondecreasing, which is part of the definition of a Young function.
\end{remark}

It turns out that every Luxemburg--Orlicz norm $\rho_A$ is equivalent to a sublinear functional of the form $\rho_{\Gamma_C}$ for a suitable increasing function~$C$. This is the content of our second result.

\begin{theorem}\label{newth_1.2}
Let $A(t)=\int_0^t a(s)\, ds$, $t\in \Rp$, be a Young function with $a(s)$ absolutely continuous. Define
\[
c(t):=t\,\frac{d}{dt}\!\left(\frac{A(t)}{t}\right)=a(t) - \frac{A(t)}{t}=\frac{1}{t}\int_0^t s\,a'(s)\,ds, \quad t\in\Rp,
\]
and set
\[
C(t):=\int_0^t c(s)\,ds, \qquad t\in \Rp.
\]
Let $(\O,\mu)$ be a $\sigma$-finite measure space and suppose the (increasing) function $C$ satisfies
\begin{equation}\label{1.1}
\int_{\Rp}C\!\left(\frac{k}{1+t}\right)dt<\infty
\end{equation}
for some $k>0$. Then
\begin{equation}\label{1.3}
\tfrac12\,\rgc(f)\le \ra(f)\le \rgc(f), \qquad f\in \M(\O),
\end{equation}
in which
\[
\rgc(f):=\inf\left\{\l>0: \int_{\Rp}C\!\left((t\lambda)^{-1}\int_0^t f^*(s)\,ds\right)dt\le 1\right\}.
\]
The functional $\rgc$ is sublinear; it need not be a norm (in particular, it need not be homogeneous) since $C$ may fail to be convex (cf.\ Example~\ref{ex_3.3}). It is in this sense that $\rgc$ ``extends'' the class of Luxemburg--Orlicz norms.
\end{theorem}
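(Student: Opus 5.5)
The plan is to reduce Theorem~\ref{newth_1.2} to the mechanism behind Theorem~\ref{th_1.1}, by observing that $A$ is precisely the ``calligraphic'' transform of $C$. Since $c(s)/s=\frac{d}{ds}\bigl(A(s)/s\bigr)$ and $A(s)/s\le a(s)\to0$ as $s\to0^+$ (because $a$ maps $\Rp$ onto itself), we get
\[
t\int_0^t c(s)\,\frac{ds}{s}=t\Bigl(\frac{A(t)}{t}-\lim_{s\to0^+}\frac{A(s)}{s}\Bigr)=A(t),\qquad t>0 .
\]
So $A$ plays the role of $\mathcal A$ and $C$ the role of $A$ in Theorem~\ref{th_1.1}. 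Hypothesis \eqref{1.1} is the analogue of \eqref{new1.2}: by Remark~\ref{rem_motiv1.2} it is equivalent to $\int_0^k C(s)s^{-2}\,ds<\infty$. Next I pass from $(\O,\mu)$ to $\Rp$. Both $\ra(f)=\ra(f^*)$ (equimeasurability) and $\rgc(f)$ depend only on $f^*$, so it suffices to prove \eqref{1.3} for $g=f^*$, a nonnegative nonincreasing function on $\Rp$.

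The difficulty is that $C$ need not be a Young function: $c$ is nonnegative but need not be nondecreasing. Hence Theorem~\ref{th_1.1} cannot be quoted verbatim. I will instead redo its modular argument using only $c\ge0$ and the identity above. Fix a nonincreasing $g$ and put $h=Pg$. Then $h$ is nonincreasing, $h\ge g$, and $(th)'=g$, i.e.\ $g=h+th'$. I compare the modulars $\int_{\Rp}A(g/\l)\,dt$ and $\int_{\Rp}C(h/\l)\,dt$. Writing $A(u)=C(u)+u\,c(u)$ (which follows from $c=a-A/t$ and $C'=c$), I will integrate by parts in
\[
\int_{\Rp}\frac{d}{dt}\bigl[t\,C(h(t)/\l)\bigr]\,dt=\int_{\Rp}\Bigl[C(h/\l)+t\,c(h/\l)\,\frac{h'}{\l}\Bigr]dt .
\]
Together with $h'=(g-h)/t$, this expresses $\int C(h/\l)$ in terms of $\int (h/\l)\,c(h/\l)$ and $\int (g/\l)\,c(h/\l)$. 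The boundary terms vanish by \eqref{1.1} and finiteness of the modular.

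For the right inequality $\ra\le\rgc$, I aim to show $\int A(g/\l)\le\int C(Pg/\l)$. I would try to get this from the integrated identity combined with $g\le h$ and the monotonicity of $A(u)/u$. For the left inequality $\rgc\le2\ra$, I aim for $\int C(Pg/(2\l))\le\int A(g/\l)$ whenever the latter is $\le1$. This mirrors the factor $\tfrac12$ in \eqref{1.2}. Here the natural tool is a Hardy-type estimate $\int C(Pg)\le\int \bigl(C(g)+g\,c(g)\bigr)$ in the spirit of \eqref{eq:IBP}, with the $2$ absorbing a comparison between $c(h)$ and $c(g)$.

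I expect this left inequality to be the main obstacle. Without monotonicity of $c$ one cannot compare $c(h/\l)$ with $c(g/\l)$ pointwise. My first attempt is to use instead that $A(u)/u=\int_0^u c(s)s^{-1}\,ds$ is nondecreasing, which holds because $c\ge0$, and to work with a layer-cake decomposition of $g$ into characteristic functions of intervals $[0,r)$. For such a step function one has $P\chi_{[0,r)}=\min(1,r/t)$, and the modular $\int_{\Rp}C\bigl(\min(1,r/t)\,u\bigr)\,dt$ can be computed explicitly: the substitution $s=ru/t$ gives exactly $A(u)$ times $r/u$. This yields equality on these building blocks, and the factor $2$ will be needed to pass to general nonincreasing $g$ via sublinearity. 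Finally, sublinearity of $\rgc$ follows from $P$ being linear, $(f_1+f_2)^*$ being controlled through $P$ by $Pf_1^*+Pf_2^*$, and the monotonicity of $C$.
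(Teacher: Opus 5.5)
\textbf{There is a genuine gap: your proposal does not prove either inequality in \eqref{1.3}.} The identity $t\int_0^t c(s)\,ds/s=A(t)$ and the reduction to $g=f^*$ are fine. One of your stated tools is wrong, however, and the two mechanisms you sketch cannot close the argument.

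\emph{The pointwise identity is false.} $A(u)=C(u)+u\,c(u)$ does not hold. From $c=a-A/u$ you get $u\,a(u)=A(u)+u\,c(u)$. The correct relation between $A$ and $C$ is $A(u)=C(u)+u\int_0^u C(s)s^{-2}\,ds$, the analogue of \eqref{eq:IBP}. For $A(u)=u^2/2$ you have $C(u)=u^2/4$, so $C(u)+uc(u)=3u^2/4\ne A(u)$.

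\emph{The right inequality is not derived.} Your integration by parts only gives $\int C(h/\l)\,dt=\l^{-1}\int c(h/\l)(h-g)\,dt$. This cannot be compared with $\int A(g/\l)\,dt$ pointwise: the integrand vanishes wherever $g=h$, for instance on an initial interval where $g$ is constant. ``Monotonicity of $A(u)/u$'' does not supply the missing step.

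\emph{The left inequality is not derived either.} Your block computation is correct in substance: $\int C(P(u\chi_{[0,r)}))\,dt=rA(u)$ (not $rA(u)/u$). But the modular $g\mapsto\int C(Pg/\l)\,dt$ is nonlinear in $g$, and $C$ is not convex, so equality on blocks cannot be reassembled into a bound for general $g$. Even granting sublinearity of $\rgc$, you would be summing the $\rgc$-values of infinitely many pieces, which gives nothing. Your sublinearity argument stalls at the same place: after $\int C(P(f_1+f_2)^*/\l)\le\int C((Pf_1^*+Pf_2^*)/\l)$, you still need something like convexity of $C$ to split the sum.

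\textbf{The missing idea:} apply the layer-cake decomposition to the Young functions, not to $g$. Since $c\ge0$,
\[
\int C(Pg/\l)\,dt=\int_0^\infty |\{Pg/\l>\tau\}|\,c(\tau)\,d\tau,\qquad \int A(g/\l)\,dt=\int_0^\infty\Bigl(\frac1\tau\int_{\{g/\l>\tau\}}\frac{g}{\l}\,dt\Bigr)c(\tau)\,d\tau .
\]
Both modulars are therefore superpositions of level-set functionals with the same nonnegative weight. Everything then reduces to the single-level inequalities \eqref{3.1}:
\[
\tau^{-1}\int_{\{g>\tau\}}g\le|\{Pg>\tau\}|\le 2\tau^{-1}\int_{\{g>\tau/2\}}g .
\]
\begin{itemize}
\item The first holds because $\{g>\tau\}\subset\{Pg>\tau\}=(0,t_0)$ with $\tau t_0=\int_0^{t_0}g$.
\item The second follows by splitting $g=\min(g,\tau/2)+(g-\tau/2)_+$ and using the weak-type $(1,1)$ bound for $P$.
\end{itemize}
Integrating these against $c(\tau)\,d\tau$ gives $\int A(g/\l)\le\int C(Pg/\l)\le\int A(2g/\l)$, which is \eqref{1.3}. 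Only $c\ge0$ is used, which is why the non-monotonicity of $c$ is harmless.

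Your block computation is in fact the dual of this argument. The right building blocks are $c=\delta_\tau$, that is, $C=\chi_{(\tau,\infty)}$ and $A(u)=(u/\tau)\chi_{\{u>\tau\}}$. For these, the two modulars are exactly the two sides of \eqref{3.1}, and positivity of the weight lets you integrate over $\tau$.
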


\begin{remark}\label{rem_1.3}
\begin{itemize}
\item[(1)] We recall that one has $A(t)/t\le a(t)\le A(2t)/t$ for all $t>0$. In particular $A(t)=\int_0^t a(s)\,ds$ and $\bar{A}(t):=\int_0^t \frac{A(s)}{s}\,ds$ are equivalent up to a factor of~$2$, and hence give rise to the same Orlicz space with equivalent norms. Consequently there is no essential loss of generality in the absolute-continuity assumption of Theorem~\ref{newth_1.2} on $a$, since we may always pass from $A$ to $\bar A$, whose density $A(s)/s$ is absolutely continuous on $\Rp$. No $\Delta_2$ condition on $A$ is assumed in this passage.
\item[(2)] When $\mu(\O)<\infty$, we may take $a(s)$, and hence $c(s)$, equal to $0$ on $(0,1)$. In this case \eqref{1.1} is automatically satisfied, so \eqref{1.3} is valid with no essential restrictions.
\item[(3)] In the special case $A(t)=t^p$, $1<p<\infty$, one has $\mathcal A(t)=(p/(p-1))\,A(t)$, so Theorem~\ref{th_1.1} recovers the classical Hardy inequality, and $L_{\mathcal A}=L^p=L_A$. The interest of Theorem~\ref{th_1.1} lies in the cases when $\mathcal A$ is genuinely larger than $A$.
\end{itemize}
\end{remark}

\smallskip

\noindent\textbf{Structure of the paper.} In Section~\ref{sec:ri} we recall the basics of r.i.\ spaces, with emphasis on Orlicz spaces. Section~\ref{sec:proofs} contains the proof of Theorem~\ref{newth_1.2}, from which Theorem~\ref{th_1.1} follows as a corollary. This is followed by Example~\ref{ex_3.3} which illustrates the non-convexity of $C$ (and hence the failure of $\rho_{\Gamma_C}$ to be a norm). In Section~\ref{sec:apps} we apply Theorem~\ref{th_1.1} to the Hardy--Littlewood maximal function, approximate identities and Calder\'on--Zygmund operators. The mappings involved are shown to be optimal in the first two cases. In Section~\ref{sec:interp} we use Theorem~\ref{newth_1.2} to make more computable a formula for the dual of the $\K$-method interpolation space between two Orlicz spaces; it was the application of such a result to the embedding theory of Orlicz--Sobolev spaces in \cite{KP} that motivated Theorem~\ref{newth_1.2}.

\section{Rearrangement-invariant spaces}\label{sec:ri}

Let $(\O,\mu)$ be a $\sigma$-finite measure space. Denote by $\M(\O)$ the set of $\mu$-measurable real-valued functions on $\O$ and by $\M_+(\O)$ the nonnegative functions in $\M(\O)$. A Banach function norm is a functional $\r: \M_+(\O)\to [0,\infty]$ satisfying
\begin{itemize}
\item[(A1)] $\r(f)=0$ if and only if $f=0$ $\mu$-a.e.;
\item[(A2)] $\r(cf)=c\r(f)$, $c\ge 0$;
\item[(A3)] $\r(f+g)\le\r(f)+\r(g)$;
\item[(A4)] $0\le f_n\uparrow f$ implies $\r(f_n)\uparrow\r(f)$;
\item[(A5)] $\mu(E)<\i$ implies $\r(\chi_E)<\i$;
\item[(A6)] $\mu(E)<\i$ implies $\int_E f\,d\mu\le c_E(\r)\r(f)$, for some constant $c_E(\r)$ depending on $E$ and $\r$ but not on $f\in \M_+(\O)$.
\end{itemize}
A Banach function norm is said to be \emph{rearrangement invariant} (r.i.) if $\r(f)=\r(g)$ whenever $f,g\in \M_+(\O)$ are equimeasurable in the sense that $f^*=g^*$; the nonincreasing rearrangement, $f^*$, of $f\in \M(\O)$ on $\Rp$ is defined as
\[
f^*(t):=\inf\{\l>0: \mu(\{ x\in \O: |f(x)|>\l\})\le t\}, \qquad t\in I_\mu:=(0,\mu(\O)).
\]
It satisfies
\[
|\{ t\in I_\mu: f^*(t)>\tau\}|=\mu(\{ x\in \O: |f(x)|>\tau\}), \qquad f\in \M(\O),\ \tau\in\Rp.
\]
Although $f\mapsto f^*$ is not subadditive, the mapping $f\mapsto t^{-1}\int_0^t f^*(s)\,ds$ is, namely
\begin{equation}\label{2.1}
t^{-1}\int_0^t(f+g)^*(s)\,ds \le t^{-1}\int_0^t f^*(s)\,ds+t^{-1}\int_0^t g^*(s)\,ds
\end{equation}
for all $f,g\in \M(\O)$ and $t\in \Rp$.

A basic technique for working with r.i.\ norms is the Hardy--Littlewood--P\'olya (HLP) principle, which asserts that
\[
\int_0^t f^*\le \int_0^t g^* \quad\text{for all } t>0 \quad\text{implies}\quad \rho(f)\le \rho(g);
\]
see \cite[Chapter~2, Theorem~4.6, p.~61]{BS}.

The K\"othe dual of a Banach function norm $\r$ is another such norm, $\r'$, defined by
\[
\r'(g):=\sup_{\r(f)\le 1}\int_\O fg\,d\mu, \quad f,g\in \M_+(\O).
\]
It satisfies the Lorentz--Luxemburg theorem (sometimes referred to as the principle of duality):
\[
\r'':=(\r')'=\r;
\]
see \cite[Chapter~1, Theorem~2.7, p.~10]{BS}.

The space $\lr(\O)$ is the vector space
\[
\{f\in \M(\O): \r(|f|)<\i\},
\]
endowed with the norm $\|f\|_{\lr}:=\r(|f|)$. This Banach space is said to be an r.i.\ space provided $\r$ is an r.i.\ function norm.

The Luxemburg--Orlicz norm $\ra$ defined in \eqref{gn} is an r.i.\ norm; indeed,
\[
\ra(f)=\inf\left\{\l>0:\int_{I_\mu}A\!\left(\frac{f^*(t)}{\l}\right)dt\le 1\right\},\qquad f\in \M(\O).
\]
Its K\"othe dual, $\ra'$, satisfies
\[
\r_{\widetilde A}(g)\le \ra'(g)\le 2\,\r_{\widetilde A}(g), \qquad g\in \M(\O),
\]
with
\[
\widetilde A(t):=\int_0^t a^{-1}(s)\,ds, \qquad t\in \Rp,
\]
called the Young function complementary to $A$.

For further properties of r.i.\ spaces see \cite{BS}; for Orlicz spaces in particular see \cite{RR}.

\section{Proofs of Theorems~\ref{th_1.1} and~\ref{newth_1.2}}\label{sec:proofs}

We require the following inequalities, which are analogues of those for the Hardy--Littlewood maximal function $Mf$ in \cite[Theorem~1(b) p.~5 and Theorem~5.2(b) p.~23]{S}. Namely, for all $\tau>0$,
\begin{equation}\label{3.1}
\frac{1}{2\tau}\int_{\{t\in I_\mu:\, f^*(t)>\tau\}} f^*(t)\,dt \;\le\; \tfrac12\left|\{t\in I_\mu: t^{-1}\!\int_0^t f^*(s)\,ds>\tau\}\right|\;\le\; \frac{1}{\tau}\int_{\{t\in I_\mu:\, f^*(t)>\tau/2\}} f^*(t)\,dt.
\end{equation}
(The factor $\tfrac12$ at the start, which cancels the $2$ in the right-hand integrand, has been incorporated into the inequality so that no change of variable is needed in the conclusion.)

To prove \eqref{3.1}, let $t_0$ be the least $t$ for which $t^{-1}\int_0^t f^*(s)\,ds=\tau$. (The inequalities are trivial if there is no such $t_0$.) Then
\[
\left|\{t\in I_\mu: t^{-1}\!\int_0^t f^*(s)\,ds>\tau\}\right|=t_0= \frac{1}{\tau}\int_0^{t_0} f^*(t)\,dt\ge \frac{1}{\tau}\int_{\{t\in I_\mu:\, f^*(t)>\tau\}} f^*(t)\,dt,
\]
which yields the first inequality. For the second, define
\[
f_\tau(t):=\min\!\left\{f^*(t),\tfrac{\tau}{2}\right\}, \qquad f^\tau(t):=f^*(t)-f_\tau(t).
\]
Then
\[
\left|\{t\in I_\mu: t^{-1}\!\int_0^t f^*(s)\,ds>\tau\}\right|\le \left|\{t\in I_\mu: t^{-1}\!\int_0^t f^\tau(s)\,ds>\tfrac{\tau}{2}\}\right|
\le \frac{2}{\tau}\int_{I_\mu} f^\tau(t)\,dt
\]
\[
\le \frac{2}{\tau}\int_{\{t\in I_\mu: f^*(t)>\tau/2\}} f^\tau(t)\,dt
\le \frac{2}{\tau}\int_{\{t\in I_\mu: f^*(t)>\tau/2\}} f^*(t)\,dt,
\]
which is the second inequality after multiplication by $\tfrac12$.

We can now turn to the proof of Theorem~\ref{newth_1.2}. From the definition of $c$ we get
\[
A(t)=t\int_0^t c(s)\,\frac{ds}{s}.
\]
The first inequality in \eqref{3.1} ensures, for all $\l>0$,
\[
\int_{\Rp}\int_{\{t\in I_\mu:\, f^*(t)/\l>\tau\}} \frac{f^*(t)}{\l}\,dt\,c(\tau)\frac{d\tau}{\tau} \;\le\; \int_{\Rp}\left|\{t\in I_\mu: \tfrac{t^{-1}\int_0^t f^*(s)\,ds}{\l}>\tau\}\right|c(\tau)\,d\tau;
\]
that is,
\begin{align*}
\int_{I_\mu} A\!\left(\tfrac{f^*(t)}{\l}\right)dt&= \int_{I_\mu}\frac{f^*(t)}{\l}\int_0^{f^*(t)/\l} c(\tau)\frac{d\tau}{\tau}\,dt\\
&=\int_{\Rp}\int_{\{ t\in I_\mu:\, f^*(t)/\l>\tau\}}\frac{f^*(t)}{\l}\,dt\,c(\tau)\frac{d\tau}{\tau}\\
&\le \int_{\Rp}\left|\{t\in I_\mu: \tfrac{t^{-1}\int_0^t f^*(s)\,ds}{\l}>\tau\}\right|c(\tau)\,d\tau\\
&=\int_{I_\mu}C\!\left(\tfrac{t^{-1}\int_0^t f^*(s)\,ds}{\l}\right)dt.
\end{align*}
Conversely, the second inequality in \eqref{3.1} yields
\begin{align*}
\int_{I_\mu}C\!\left(\tfrac{t^{-1}\int_0^t f^*(s)\,ds}{\l}\right)dt&= \int_{\Rp}\left|\{t\in I_\mu:\, \tfrac{t^{-1}\int_0^t f^*(s)\,ds}{\l}>\tau\}\right|c(\tau)\,d\tau\\
&\le 2\int_{\Rp}\int_{\{ t\in I_\mu:\, f^*(t)/\l>\tau/2\}}\frac{f^*(t)}{\l}\,dt\,c(\tau)\frac{d\tau}{\tau}\\
&=\int_{I_\mu}\frac{2 f^*(t)}{\l}\int_0^{2 f^*(t)/\l} c(\tau)\frac{d\tau}{\tau}\,dt\\
&=\int_{I_\mu} A\!\left(\tfrac{2 f^*(t)}{\l}\right)dt.
\end{align*}
Together these give
\[
\tfrac12\,\rgc(f)\le\rho_A(f^*)\le \rgc(f),
\]
which, since $\ra(f)=\ra(f^*)$, completes the proof of Theorem~\ref{newth_1.2}. \qed

\medskip

The result of Theorem~\ref{th_1.1} is now a consequence of the following corollary.

\begin{corollary}\label{cor_3.1}
Let $A(t)=\int_0^t a(s)\,ds$, $t\in \Rp$, be a Young function for which
\[
\int_{\Rp}A\!\left(\frac{k}{1+t}\right)dt<\i
\]
for some constant $k>0$, so that, as shown before,
\[
\int_0^t a(s)\,\frac{ds}{s}<\i, \quad t\in \Rp.
\]
Set
\begin{equation}\label{3.2}
\mathcal A(t):=t\int_0^t a(s)\,\frac{ds}{s}, \qquad t\in \Rp.
\end{equation}
Then $\mathcal A(t)$ is a Young function such that
\[
a(t)= \mathcal A'(t)-\frac{\mathcal A(t)}{t}, \quad t\in \Rp,
\]
whence, for any $\sigma$-finite measure space $(\O,\mu)$,
\[
\tfrac12\,\rho_{\Gamma_{\mathcal A}}(f)\le \rho_{\mathcal A}(f)\le \rho_{\Gamma_{\mathcal A}}(f), \qquad f\in \M(\O).
\]
\end{corollary}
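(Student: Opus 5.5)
The plan is to apply Theorem~\ref{newth_1.2} with $\mathcal A$ in the role of $A$, and to check that the associated function $C$ is exactly the original $A$. The work therefore splits into three parts: show that $\mathcal A$ is a Young function, compute its "$c$", and verify the hypotheses of Theorem~\ref{newth_1.2}.

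First, finiteness. By Remark~\ref{rem_motiv1.2}, the integrability hypothesis is equivalent to $\int_0^k a(s)\,s^{-1}\,ds<\infty$. Since $a$ is nondecreasing, hence bounded on compact sets, this gives $\int_0^t a(s)\,s^{-1}\,ds<\infty$ for every $t>0$. Thus $\mathcal A$ is finite.

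Second, $\mathcal A$ is a Young function. Writing $\mathcal A(t)=t\,\phi(t)$ with $\phi(t)=\int_0^t a(s)\,s^{-1}\,ds$, the function $\phi$ is locally absolutely continuous with $\phi'(t)=a(t)/t$ a.e. Hence
\[
\mathcal A'(t)=\phi(t)+a(t),
\]
as noted in the introduction. This derivative is nondecreasing. It tends to $0$ as $t\to0$: $\phi(t)\to0$ by integrability, and $a(t)\to0$ because $a$ maps $\Rp$ onto itself and is monotone. It tends to $\infty$ as $t\to\infty$ because $a$ does. So $\mathcal A=\int_0^t \mathcal A'$ is a Young function with density $\mathcal A'$. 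I would also record that $\mathcal A(t)/t=\phi(t)$, so
\[
\mathcal A'(t)-\frac{\mathcal A(t)}{t}=a(t),
\]
which is the displayed identity.

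Third, the hypotheses of Theorem~\ref{newth_1.2} for $\mathcal A$. The role of "$c$" there is $t\,\frac{d}{dt}(\mathcal A(t)/t)=t\phi'(t)=a(t)$. So the associated $C$ is $\int_0^t a=A$, and condition \eqref{1.1} for this $C$ is precisely the assumed condition on $A$. Consequently $\rho_{\Gamma_C}$ is $\rho_{\Gamma_{\mathcal A}}$ as written in the corollary: the functional built from $A$ applied to $t^{-1}\int_0^t f^*$. Theorem~\ref{newth_1.2} then yields the two-sided estimate.

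The main obstacle is that Theorem~\ref{newth_1.2} assumes the density of the Young function is absolutely continuous. Here the density is $\phi+a$, and $a$ need not be absolutely continuous. I would handle this by observing that the proof of Theorem~\ref{newth_1.2} only uses the representation
\[
\mathcal A(t)=t\int_0^t c(s)\,\frac{ds}{s}
\]
with $c\ge0$, where $C=\int c$. Here this holds directly with $c=a$, by the definition \eqref{3.2}, and needs no differentiation of $a$. The layer-cake computations then go through verbatim, using \eqref{3.1} and Fubini for nonnegative integrands.
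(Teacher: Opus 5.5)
Your proposal is correct and follows the paper's own route. The identity $a=\mathcal A'-\mathcal A(t)/t$ shows that the function $c$ of Theorem~\ref{newth_1.2} built from $\mathcal A$ is exactly $a$, so $C=A$, hypothesis \eqref{1.1} becomes the assumed integrability of $A$, $\rho_{\Gamma_C}=\rho_{\Gamma_{\mathcal A}}$, and the two-sided estimate follows. You also point out that the proof of Theorem~\ref{newth_1.2} uses only the representation $\mathcal A(t)=t\int_0^t a(s)\,s^{-1}\,ds$ with $a\ge 0$, so the absolute continuity of the density $\int_0^t a(s)\,s^{-1}\,ds+a(t)$ is never needed; this correctly fills a point the paper passes over silently.
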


\begin{example}\label{ex_3.3}
Consider the Young function
\[
A(t)=\int_0^t \ln^\beta(1+s)\,ds, \qquad 0<\beta<1,\ t\in \Rp,
\]
for which $a(s)=\ln^\beta(1+s)$, $s\in \Rp$. A short computation gives
\[
c(t)=\frac{\beta}{t}\int_0^t \frac{s}{1+s}\ln^{\beta-1}(1+s)\,ds\sim \beta\ln^{\beta-1}(1+t), \qquad t\to\i,
\]
so $c(t)$ essentially decreases rather than increases as $t\to\infty$. That is, $C(t)=\int_0^t c(s)\,ds$ is \emph{not} convex, and consequently $\rho_{\Gamma_C}$ is genuinely a non-homogeneous sublinear functional rather than a norm; cf.\ Theorem~\ref{newth_1.2}. This example shows that the functionals $\rho_{\Gamma_C}$ produced by Theorem~\ref{newth_1.2} are strictly more general than Luxemburg--Orlicz norms.
\end{example}

The complementary Young function of $\mathcal A$ will be needed in Section~\ref{sec:apps} (Theorem~\ref{th_4.5}) for the proof of the Calder\'on--Zygmund bound; we record its description now.

\begin{remark}\label{rem_3.2}
The complementary Young function $\widetilde{\A}$ of $\A$ satisfies
\[
\tfrac{3}{2}\int_0^{t/3} b^{-1}(s)\,ds\le \widetilde{\A}(t)\le \int_0^t b^{-1}(s)\,ds,
\]
where $b(t):=\int_0^t a(s)\,\frac{ds}{s}$, $t\in\Rp$. To see this, observe that
\begin{align*}
b(t)\le {\A}'(t)=\int_0^t a(s)\,\frac{ds}{s}+a(t)&\le \frac{\ln 2+1}{\ln 2}\int_0^{2t} a(s)\,\frac{ds}{s}\le 3b(2t),
\end{align*}
and so
\[
\tfrac12\,b^{-1}\!\left(\tfrac{t}{3}\right)\le (\A')^{-1}(t)\le b^{-1}(t), \qquad t\in \Rp.
\]
The estimate then follows by integration, using that $\widetilde{\A}(t)=\int_0^t (\A')^{-1}(s)\,ds$.
\end{remark}

\section{Applications of Theorem~\ref{th_1.1}}\label{sec:apps}

%In each application below, the inclusion $L_{\mathcal A}\hookrightarrow L_A$ implicit in our estimates is in fact the largest Orlicz inclusion of its kind, by Theorem~\ref{th_1.1} (since $L_{\mathcal A}(\Rp)$ is, by construction, the largest Orlicz space mapped into $L_A(\Rp)$ by $P$). The applications below thus inherit the optimality of Theorem~\ref{th_1.1} among Orlicz domains for the corresponding operators, since each of these operators dominates $P$ (or its dual~$Q$) on rearrangements. We comment on this in each subsection.

The operators in the applications below are controlled in the same way by the Hardy--Littlewood maximal function, so we consider it first.
\subsection{The Hardy--Littlewood maximal function}

We begin with the following definition.

\begin{definition}\label{def_4.1}
Given a function $f$ locally-integrable on $\Rn$ we define its Hardy--Littlewood maximal function, $Mf$, at $x\in \Rn$ by
\[
(Mf)(x)=\sup_{r>0}\frac{1}{|B(x,r)|}\int_{B(x,r)}|f(y)|\, dy,
\]
where $B(x,r)=\{y\in \Rn: |x-y|<r\}$.
\end{definition}

The following result extends to Orlicz spaces the well-known boundedness of $M$.

\begin{theorem}\label{th_4.2}
Let $A$ and $\mathcal A$ be as in Theorem~\ref{th_1.1}. Then there exists $C>0$, independent of locally-integrable $f$ on $\Rn$, such that
\[
\rho_A(Mf)\le C\rho_{\mathcal A}(f).
\]
Moreover, $L_{\mathcal A}(\Rn)$ is the largest Orlicz space mapped by $M$ into $L_A(\Rn)$. %In particular, the embedding $M:L_{\mathcal A}(\Rn)\to L_A(\Rn)$ is optimal among Orlicz domains.
\end{theorem}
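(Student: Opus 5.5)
We will prove the two assertions of Theorem~\ref{th_4.2} separately. Boundedness will come from the Riesz--Herz inequality combined with Theorem~\ref{th_1.1}. Optimality will come from testing $M$ on radially decreasing functions, for which $(Mf)^*$ dominates $Pf^*$ from below.

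\textbf{Boundedness.} We invoke the classical Riesz--Herz estimate
\[
(Mf)^*(t)\le c_n\,(Pf^*)(t)=c_n\, t^{-1}\!\int_0^t f^*(s)\,ds,\qquad t>0,
\]
with $c_n$ depending only on $n$. It follows, for instance, from the weak-type $(1,1)$ bound for $M$ via the splitting $f=f\chi_{\{|f|>f^*(t)\}}+f\chi_{\{|f|\le f^*(t)\}}$. Since $\ra$ is rearrangement invariant and a lattice norm,
\[
\ra(Mf)=\ra\big((Mf)^*\big)\le c_n\,\ra(Pf^*)=c_n\,\rho_{\Gamma_{\mathcal A}}(f).
\]
Applying Theorem~\ref{th_1.1} in the form of Corollary~\ref{cor_3.1} on $(\Rn,dx)$ gives $\rho_{\Gamma_{\mathcal A}}(f)\le 2\rho_{\mathcal A}(f)$. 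Hence $\ra(Mf)\le 2c_n\rho_{\mathcal A}(f)$, so one may take $C=2c_n$.

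\textbf{Optimality.} Suppose $L_B(\Rn)$ is an Orlicz space with $\ra(Mf)\le K\rho_B(f)$ for all $f$. We must show $L_B\subset L_{\mathcal A}$, that is, $\mathcal A(t)\le B(K't)$ for some $K'$; equivalently, that $P$ maps $L_B(\Rp)$ into $L_A(\Rp)$, after which the characterization \eqref{eq:OrliczChar} from the introduction finishes the argument. Given a nonincreasing $g$ on $\Rp$, set $f(x)=g(\omega_n|x|^n)$, so that $f^*=g$. For $|x|=r$ put $t=\omega_n r^n$. The ball $B(x,2r)$ contains $B(0,r)$ and has measure $2^n t$, so
\[
(Mf)(x)\ge 2^{-n}t^{-1}\int_0^t g(s)\,ds=2^{-n}(Pg)(t).
\]
Because $Pg$ is nonincreasing, $Mf$ dominates the radial function $x\mapsto 2^{-n}(Pg)(\omega_n|x|^n)$, whose rearrangement is $2^{-n}Pg$. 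Therefore
\[
\ra(Pg)\le 2^nK\rho_B(g)
\]
for every nonincreasing $g$. Since $(Pg)\le P(g^*)$ and $\rho_B(g)=\rho_B(g^*)$, this extends to all $g\in L_B(\Rp)$. Thus $P:L_B(\Rp)\to L_A(\Rp)$, and \eqref{eq:OrliczChar} gives $\mathcal A(t)\le B(K't)$. Equivalently, Theorem~\ref{th_1.1} gives $\rho_{\mathcal A}(g)\le\rho_{\Gamma_{\mathcal A}}(g)\le 2^nK\rho_B(g)$ for all $g$, and the function inclusion $L_B\subset L_{\mathcal A}$ on $\Rp$ transfers to $\Rn$ because Orlicz inclusions depend only on the Young functions.

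The main obstacle is this last transfer. Passing from the norm inequality $\rho_{\mathcal A}\lesssim\rho_B$ to the pointwise comparison $\mathcal A(t)\le B(K't)$ for all $t$ requires testing on characteristic functions $\chi_{(0,s)}$ with $s$ ranging over all of $\Rp$. This is available because the measure space is nonatomic and infinite. The care needed is in getting the constants uniform in $t$, and we would rely on the characterization \eqref{eq:OrliczChar} rather than redo that computation.
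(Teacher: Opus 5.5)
Your proposal is correct and follows the paper's route. For boundedness, both use the Riesz--Herz bound $(Mf)^*\lesssim Pf^*$ together with Theorem~\ref{th_1.1}. For optimality, both test on radially decreasing functions to get $P:L_B(\Rp)\to L_A(\Rp)$ and then apply Theorem~\ref{th_1.1} again. The only difference is that you prove the needed lower bound $Mf\ge 2^{-n}(Pg)(\omega_n|x|^n)$ by hand, whereas the paper cites the two-sided estimate $(Mf)^*\approx Pf^*$.

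The obstacle you flag at the end is not one. Rearrangement invariance gives $\rho_{\mathcal A}(f)=\rho_{\mathcal A}(f^*)\le 2^nK\rho_B(f^*)=2^nK\rho_B(f)$ for $f$ on $\Rn$ directly. There is no need to pass through a pointwise comparison of Young functions.
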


\begin{proof}
It is proved, for $n=1$, in \cite{H} and for general $n$ in \cite{BS}, that there exist positive constants $c$ and $c'$, depending only on $n$, with
\[
c(Mf)^*(t)\le (Pf^*)(t)\le c'(Mf)^*(t), \qquad t>0.
\]
Thus, by Theorem~\ref{th_1.1},
\[
\rho_A(Mf)=\rho_A((Mf)^*)\le c^{-1}\rho_A(Pf^*)\le 2c^{-1}\rho_{\mathcal A}(f^*)= 2c^{-1}\rho_{\mathcal A}(f).
\]
For the optimality, suppose $L_B$ is any Orlicz space with $M:L_B(\Rn)\to L_A(\Rn)$. Testing against radial decreasing $f$ and using $(Mf)^*\approx Pf^*$, one obtains $P:L_B(\Rp)\to L_A(\Rp)$, which by Theorem~\ref{th_1.1} forces $L_B\subset L_{\mathcal A}$.
\end{proof}

\subsection{Approximate identities}

Let $\phi$ be an integrable function on $\Rn$ and set
\[
\phi_{\epsilon}(x)=\epsilon^{-n}\phi(x/\epsilon), \qquad \epsilon>0,\ x\in \Rn.
\]
Using the methods of \cite[Theorem~2, pp.~62--65]{S} one can show that
\[
(\phi_{\epsilon}\ast f)(x)=\int_{\Rn}\phi_{\epsilon}(x-y)f(y)\, dy
\]
converges to $f$ in $L_A(\Rn)$ as $\epsilon \to 0^+$, provided
\[
\int_{\Rn}\mathcal A(\lambda|f(y)|)\, dy<\i
\]
for all $\lambda>0$. (Here, $A$ and $\mathcal A$ are as in Theorem~\ref{th_1.1}.) For this reason the family $\{\phi_{\epsilon}\}_{\epsilon>0}$ is called an approximate identity.

It is further proved in the same theorem in \cite{S} that
\[
\sup_{\epsilon>0}|(\phi_{\epsilon}\ast f)(x)|\le \left[\int_{\Rn}|\psi(y)|\, dy\right](Mf)(x), \qquad x\in \Rn,
\]
in which the radial majorant of $\phi$,
\[
\psi(x)=\sup_{|y|\ge |x|}|\phi(y)|,
\]
is assumed to be integrable.

In view of Theorem~\ref{th_4.2} we obtain the following result.

\begin{theorem}\label{th_4.3}
Let $\phi$ be an integrable function on $\Rn$ whose radial majorant
\[
\psi(x)=\sup_{|y|\ge |x|}|\phi(y)|
\]
is also integrable on $\Rn$. Then, with $A$ and $\mathcal A$ as in Theorem~\ref{th_1.1},
\[
\rho_A\!\left(\sup_{\epsilon>0}|(\phi_{\epsilon}\ast f)(x)|\right)\le C \rho_{\mathcal A}(f),
\]
whenever $\rho_{\mathcal A}(f)<\i$. Moreover, when $\phi$ is positive, radial and decreasing, the domain $L_{\mathcal A}(\Rn)$ is the largest Orlicz space for which this bound holds.
\end{theorem}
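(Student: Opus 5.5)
The boundedness follows directly from the pointwise majorization quoted just before the statement together with Theorem~\ref{th_4.2}. Put $\Phi f:=\sup_{\epsilon>0}|\phi_\epsilon\ast f|$. By the cited result of \cite{S},
\[
\Phi f(x)\le \|\psi\|_{L^1(\Rn)}\,(Mf)(x), \qquad x\in\Rn.
\]
Since $\ra$ is a lattice norm (monotone and homogeneous), we get
\[
\ra(\Phi f)\le \|\psi\|_{1}\,\ra(Mf).
\]
Theorem~\ref{th_4.2} then gives $\ra(\Phi f)\le \|\psi\|_1 C\,\rho_{\mathcal A}(f)$ whenever $\rho_{\mathcal A}(f)<\infty$.

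For optimality, assume $\phi\ge0$ is radial and decreasing, and that $\ra(\Phi f)\le K\rho_B(f)$ holds for some Young function $B$. The goal is to show that $M:L_B(\Rn)\to L_A(\Rn)$; the optimality part of Theorem~\ref{th_4.2} then forces $L_B\subset L_{\mathcal A}$. Since $M$ commutes with rearrangement only up to constants, it suffices to test on radial decreasing $f\ge0$, exactly as in the proof of Theorem~\ref{th_4.2}. So I aim for a reverse pointwise bound
\[
\Phi f(x)\ge c_\phi\,(Pf^*)(c_n|x|^n)\quad\text{for } f \text{ radial decreasing},
\]
with $c_\phi>0$. This will then give $\ra(Pf^*)\lesssim \rho_B(f)=\rho_B(f^*)$, i.e.\ $P:L_B(\Rp)\to L_A(\Rp)$ on decreasing functions, hence (through $Pf\le Pf^*$) on all of $L_B(\Rp)$. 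By \eqref{eq:OrliczChar} this yields $\mathcal A(t)\le B(Kt)$.

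The reverse bound is the main step. Since $\phi$ is radial, decreasing and not identically zero, there are $\eta>0$ and $r_0>0$ with $\phi\ge\eta\chi_{B(0,r_0)}$. Then $\phi_\epsilon\ge \eta\epsilon^{-n}\chi_{B(0,r_0\epsilon)}$, so
\[
\phi_\epsilon\ast f(x)\ge \eta\,\epsilon^{-n}\int_{B(x,r_0\epsilon)}f.
\]
Choose $\epsilon=2|x|/r_0$, so that the ball $B(x,r_0\epsilon)=B(x,2|x|)$ contains $B(0,|x|)$. Because $f\ge0$, this gives
\[
\phi_\epsilon\ast f(x)\ge \eta\,(r_0/2)^n|x|^{-n}\int_{B(0,|x|)}f=c\,(Pf^*)(v_n|x|^n),
\]
where $v_n=|B(0,1)|$ and we used that $f$ is radial decreasing, so that $\int_{B(0,r)}f=\int_0^{v_nr^n}f^*$.

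Finally, the map $x\mapsto (Pf^*)(v_n|x|^n)$ has rearrangement exactly $Pf^*$. Hence
\[
\ra(\Phi f)\ge c\,\ra(Pf^*),
\]
and the optimality argument closes. The only delicate point is the routine fact that checking $P$ on decreasing functions suffices, which the inequality $Pf\le Pf^*$ recalled in the introduction handles.
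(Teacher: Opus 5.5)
Your proof is correct. The boundedness part is exactly the paper's argument; the optimality part takes a genuinely different route.

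\textbf{The paper's route.} The paper proves the pointwise reverse inequality $\sup_{\epsilon>0}|\phi_\epsilon\ast f|\ge c\,Mf$ for \emph{every} nonnegative locally integrable $f$. For each $r>0$ it uses the ball $B(x,rr_0)$ centred at $x$, then takes the supremum over $r$. This gives $\sup_\epsilon|\phi_\epsilon\ast f|\approx Mf$ pointwise, so optimality is simply inherited from the optimality clause of Theorem~\ref{th_4.2}.

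\textbf{Your route.} You use a single dilation $\epsilon=2|x|/r_0$ and only radial decreasing $f$, and obtain $\Phi f(x)\ge c\,(Pf^*)(v_n|x|^n)$ directly. This bypasses both the maximal function and the rearrangement estimate $(Mf)^*\approx Pf^*$ of \cite{H,BS}. You then make explicit two steps: the passage from decreasing functions to all of $L_B(\Rp)$ via $Pg\le Pg^*$, and the appeal to \eqref{eq:OrliczChar}.

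\textbf{What each buys.} In effect you carry out in full, for $\Phi$, the ``test on radial decreasing $f$'' step that the paper only sketches in the proof of Theorem~\ref{th_4.2}. Your argument is therefore more self-contained. The paper's argument yields a stronger and reusable statement: pointwise equivalence with $M$ for all $f\ge 0$, not just radial decreasing ones.

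One small tidying point: your announced goal of proving $M:L_B(\Rn)\to L_A(\Rn)$ and invoking Theorem~\ref{th_4.2} is never actually used, since you go straight to $P$. This is harmless, but the stated plan should match what you do.
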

\begin{proof}
The first inequality is immediate from Theorem~\ref{th_4.2}: since by hypothesis
\[
\sup_{\epsilon>0}|(\phi_{\epsilon}\ast f)(x)|\le \left[\int_{\Rn}|\psi(y)|\, dy\right](Mf)(x), \qquad x\in \Rn,
\]
applying $\rho_A$ to both sides and using $\rho_A(Mf)\le C\rho_{\mathcal A}(f)$ from Theorem~\ref{th_4.2} yields
\[
\rho_A\!\left(\sup_{\epsilon>0}|(\phi_{\epsilon}\ast f)(x)|\right)\le C\rho_{\mathcal A}(f).
\]

For the optimality when $\phi$ is positive, radial, and decreasing, we show the reverse pointwise control $Mf\lesssim \sup_{\epsilon>0}|\phi_\epsilon\ast f|$ on nonnegative locally-integrable~$f$. Pick $r_0>0$ such that $\phi(r_0)>0$ (such an $r_0$ exists since $\phi$ is positive). Then for any $r>0$ and any $y$ with $|x-y|<r r_0$, monotonicity of $\phi$ gives $\phi(|x-y|/r)\ge \phi(r_0)$. Hence
\begin{align*}
(\phi_r\ast f)(x)
&= r^{-n}\int_{\Rn}\phi\!\left(\frac{|x-y|}{r}\right)f(y)\,dy
\ge r^{-n}\,\phi(r_0)\int_{|x-y|<r r_0}f(y)\,dy\\
&= \phi(r_0)\,r_0^{\,n}\,|B(0,1)|\cdot \frac{1}{|B(x,r r_0)|}\int_{B(x,r r_0)}f(y)\,dy.
\end{align*}
Taking the supremum over $r>0$ gives $\sup_{\epsilon>0}|(\phi_\epsilon\ast f)(x)|\ge c\,(Mf)(x)$ for a constant $c=\phi(r_0)\,r_0^{\,n}\,|B(0,1)|>0$. Combined with the upper bound, we have $\sup_{\epsilon>0}|(\phi_\epsilon\ast f)|\approx Mf$ pointwise (on nonnegative $f$), so the optimality of $L_{\mathcal A}$ follows from Theorem~\ref{th_4.2}.
\end{proof}

\begin{corollary}\label{cor_4.4}
Let $\phi$, $A$ and $\mathcal A$ be as in Theorem~\ref{th_4.3}, and assume in addition that $\int_{\Rn}\phi(y)\,dy=1$. Let $E_A(\Rn)$ denote the closure in $L_A(\Rn)$ of the bounded functions of compact support (the standard ``$E_A$'' subspace; see \cite[Chapter~3, \S 4]{RR}). Then
\[
\lim_{\epsilon \to 0^+} \rho_{A}(\phi_{\epsilon}\ast f -f)=0
\qquad \text{for every } f\in E_A(\Rn).
\]
\end{corollary}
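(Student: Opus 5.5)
The plan is the standard density argument: reduce to bounded functions of compact support, and control the error uniformly in $\epsilon$ with the maximal inequality of Theorem~\ref{th_4.3}. One point needs care. Theorem~\ref{th_4.3} controls $\sup_\epsilon|\phi_\epsilon\ast g|$ in $L_A$ by $\rho_{\mathcal A}(g)$, not by $\rho_A(g)$, and $L_{\mathcal A}\subset L_A$ is in general a proper inclusion. So the approximation of $f$ cannot be done in $\rho_A$ through the maximal operator alone. Instead I would bound $\rho_A(\phi_\epsilon\ast g)$ directly. Since $\phi\ge 0$ and $\int\phi=1$, Jensen's inequality gives
\[
A\!\left(\frac{|\phi_\epsilon\ast g|}{\lambda}\right)\le \phi_\epsilon\ast A\!\left(\frac{|g|}{\lambda}\right).
\]
Integrating and using Fubini, $\int A(|\phi_\epsilon\ast g|/\lambda)\le \int A(|g|/\lambda)$, hence $\rho_A(\phi_\epsilon\ast g)\le \rho_A(g)$ uniformly in $\epsilon$. (If $\phi$ were allowed to change sign, one would use $|\phi|$ and the constant $\|\phi\|_1$.) With this uniform bound, the maximal function is not even needed for the convergence statement.

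Fix $f\in E_A(\Rn)$ and $\eta>0$. By the definition of $E_A$, choose $g$ bounded with compact support and $\rho_A(f-g)<\eta$. Then
\[
\rho_A(\phi_\epsilon\ast f-f)\le \rho_A(\phi_\epsilon\ast(f-g))+\rho_A(\phi_\epsilon\ast g-g)+\rho_A(g-f)\le 2\eta+\rho_A(\phi_\epsilon\ast g-g).
\]
It therefore suffices to show $\rho_A(\phi_\epsilon\ast g-g)\to0$ for such $g$.

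For $g$ bounded with compact support, $g\in L^1\cap L^\infty$. The classical approximate-identity theorem \cite{S} gives $\phi_\epsilon\ast g\to g$ in $L^1$, and $\|\phi_\epsilon\ast g-g\|_\infty\le 2\|g\|_\infty$. Set $h_\epsilon=\phi_\epsilon\ast g-g$ and fix $\lambda>0$. Convexity of $A$ with $A(0)=0$ gives $A(s)\le s\,A(M)/M$ for $0\le s\le M$. Taking $M=2\|g\|_\infty/\lambda$,
\[
\int A(|h_\epsilon|/\lambda)\le \frac{A(M)}{M\lambda}\|h_\epsilon\|_1\to0 .
\]
So for every $\lambda>0$ the modular is eventually $\le1$, i.e.\ $\rho_A(h_\epsilon)\le\lambda$ for small $\epsilon$. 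Hence $\rho_A(h_\epsilon)\to0$, and letting $\eta\to0$ completes the proof.

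The main obstacle is the uniform control of $\rho_A(\phi_\epsilon\ast(f-g))$. The naive route through Theorem~\ref{th_4.3} would require $f-g$ to be small in $L_{\mathcal A}$, which is not available for general $f\in E_A$. The Jensen/Fubini step avoids this. It uses the hypotheses $\phi\ge0$ and $\int\phi=1$ inherited from Theorem~\ref{th_4.3} and the corollary. The remaining steps are routine.
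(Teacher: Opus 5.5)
Your proof is correct, and it differs from the paper's in both steps.

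\emph{The reduction step.} The paper reduces to bounded compactly supported $f$ by invoking ``a $3\varepsilon$-argument'' but does not say how $\rho_A(\phi_\epsilon\ast(f-g))$ is controlled uniformly in $\epsilon$. As you point out, Theorem~\ref{th_4.3} only bounds this by $\rho_{\mathcal A}(f-g)$, which is infinite for $f\in E_A\setminus L_{\mathcal A}$. Your Jensen--Fubini estimate $\rho_A(\phi_\epsilon\ast g)\le\|\phi\|_1\rho_A(g)$ supplies exactly the ingredient that step needs.

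\emph{Bounded compactly supported $f$.} The paper uses three ingredients:
\begin{itemize}
\item[(a)] a.e.\ convergence $\phi_\epsilon\ast f\to f$;
\item[(b)] the claim that $\phi_\epsilon\ast f$ is uniformly bounded and supported in a fixed ball, which requires first truncating $\phi$, with the truncation error absorbed by the radial-majorant bound of Theorem~\ref{th_4.3};
\item[(c)] a bounded-convergence theorem for $\rho_A$.
\end{itemize}
You instead combine $L^1$ convergence, the uniform $L^\infty$ bound and the convexity estimate $A(s)\le sA(M)/M$. This needs no control of supports and no maximal function.

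\emph{What each route buys.} Yours is more elementary and more general: it uses only $\phi\in L^1$ and $\int\phi=1$, not the integrability of the radial majorant. The paper's route keeps the result tied to Theorem~\ref{th_4.3}, consistent with its status as a corollary.

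\emph{A small point.} Theorem~\ref{th_4.3} assumes $\phi\ge 0$ only in its optimality clause, so your signed-kernel variant is the one actually needed in general. In that case $\|h_\epsilon\|_\infty\le(1+\|\phi\|_1)\|g\|_\infty$, and the $3\varepsilon$ estimate picks up the factor $1+\|\phi\|_1$; neither change affects the argument.
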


\begin{proof}
The argument is the classical one (cf.~\cite[Theorem~2, p.~62]{S}), adapted to the Luxemburg--Orlicz norm. By the density definition of $E_A(\Rn)$ and a $3\varepsilon$-argument, it suffices to prove the conclusion for $f$ bounded and of compact support.

Fix such an $f$, with $\|f\|_\infty\le M$ and $\mathrm{supp}\,f\subset B(0,R)$. For $\epsilon\in(0,1]$, $\phi_\epsilon\ast f$ is uniformly bounded by $M\|\phi\|_{L^1}$ and supported in $B(0,R+1)$, since $\mathrm{supp}\,\phi_\epsilon\subset \epsilon\,\mathrm{supp}\,\phi$ and one may approximate first by truncating $\phi$ to a compact set (with the radial-majorant control absorbing the error via Theorem~\ref{th_4.3}). Hence both $f$ and $\phi_\epsilon\ast f$ lie in a common $L^\infty$-ball supported in a fixed compact set, and pointwise convergence $\phi_\epsilon\ast f\to f$ a.e.\ is standard.

The Luxemburg--Orlicz norm on bounded sets of bounded compactly supported functions is dominated by a constant multiple of $\|\cdot\|_{L^\infty}\cdot \rho_A(\chi_{B(0,R+1)})$, which in turn is finite by axiom (A5). Since pointwise convergence on a fixed bounded support of uniformly bounded functions implies $\rho_A$-convergence (by the analogue of bounded convergence for r.i.\ norms; see \cite[Chapter~1, \S 3]{BS}), we conclude $\rho_A(\phi_\epsilon\ast f - f)\to 0$.
\end{proof}

\subsection{Calder\'on--Zygmund operators}

A function $K$ on $\Rn\setminus\{0\}$, locally integrable away from the origin, is said to be a Calder\'on--Zygmund (CZ) kernel provided it satisfies the following four conditions:
\begin{itemize}
\item[(i)] There exists a constant $C_1>0$, independent of $\epsilon$ and $N$ with $0< \epsilon<N$, such that
\[
\left|\int_{\epsilon<|x|<N}K(x)\, dx\right|\le C_1;
\]
moreover, for each $N>0$ the limit $\lim_{\epsilon\to 0^+}\int_{\epsilon<|x|<N}K(x)\, dx$ exists.
\item[(ii)] There exists a constant $C_2>0$, independent of $R>0$, for which
\[
\int_{|x|<R}|x||K(x)|\, dx\le C_2 R.
\]
\item[(iii)] There exists a constant $C_3>0$, independent of $y\in \Rn\setminus\{0\}$, with
\[
\int_{|x|>2|y|}|K(x-y)-K(x)|\, dx\le C_3.
\]
\end{itemize}

The Calder\'on--Zygmund singular integral operator, $T_K$, associated to the kernel is
\[
(T_Kf)(x)=\lim_{\epsilon \to 0^+}\int_{|x-y|>\epsilon}K(x-y)f(y)\, dy,
\]
the limit existing a.e.\ for all $f\in \M(\Rn)$ for which
\[
\int_{\Rn}\frac{|f(y)|}{1+|y|^n}\, dy<\i.
\]

A theorem of Calder\'on and Zygmund asserts that for each $p$, $1<p<\i$, there exists $C_p>0$, independent of $f\in L_p(\Rn)$, with
\[
\rho_p(T_Kf)\le C_p\rho_p(f);
\]
see \cite[pp.~286--290]{T}.

If, further, $K$ satisfies
\begin{itemize}
\item[(iv)] $|K(x_1-y)-K(x_2-y)|\le C_4\,\dfrac{|x_1-x_2|}{|x_3-y|^{n+1}}$ when $x_1, x_2, x_3, y\in\Rn$ are such that $|x_1-x_2|, |x_2-x_3|, |x_3-x_1|\le R/2$ and $|x_i-y|\ge R$, $i=1, 2, 3$, where $C_4$ is independent of the points involved and of $R>0$,
\end{itemize}
then $T_K$ is of weak type $(1,1)$, namely
\[
\lambda\,|\{x\in \Rn: |(T_Kf)(x)|> \lambda\}|\le C_1'\,\|f\|_{L^1(\Rn)}
\]
for some $C_1'>0$ independent of $\lambda>0$ and $f\in L^1(\Rn)$. See \cite[p.~290]{T}.

\begin{remark}\label{rem_4.4}
The operator, $T'_K$, associated to $T_K$ is given by $T'_K=T_{K'}$ with $K'(x)=K(-x)$. The kernel $K'$ satisfies (i)--(iv) whenever $K$ does.
\end{remark}

The next theorem provides the Orlicz analogue of the Calder\'on--Zygmund theorem.

\begin{theorem}\label{th_4.5}
Let $A$ and $\mathcal A$ be as in Theorem~\ref{th_1.1}. Then
\[
T_K: L_{\mathcal A}(\Rn)\to L_A(\Rn)
\]
provided
\begin{equation}\label{cond_4.1}
t\int_0^t \widetilde{\mathcal A}(s)\,\frac{ds}{s^2}\le \widetilde{A}(Ct), \qquad t>0,
\end{equation}
for some $C>0$; here
\[
\widetilde{\mathcal A}(t)=\int_0^t b^{-1}(s)\, ds, \qquad b(t)=\int_0^t a(s)\,\frac{ds}{s},\ a=A'.
\]
\end{theorem}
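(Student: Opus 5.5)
Because both the weak type $(1,1)$ estimate and the strong $L^2$ bound of Calder\'on and Zygmund are available, I will use the standard rearrangement inequality for Calder\'on--Zygmund operators, following Bennett--Sharpley \cite{BS}:
\[
(T_Kf)^*(t)\le c\left[(Pf^*)(t)+(Qf^*)(t)\right],\qquad (Qg)(t):=\int_t^\infty g(s)\,\frac{ds}{s},
\]
valid for $t>0$ and all $f$ for which $T_Kf$ is defined, with $c$ depending only on $n$ and the kernel constants. The first step is to derive this from the weak type $(1,1)$ bound (which uses (iv)) together with the $L^2$ boundedness. The route is the usual one: apply the $K$-functional estimate for $T_K$ between $L^1\cap L^2$ and $L^{1,\infty}+L^2$, or equivalently split $f=f\chi_{\{|f|>f^*(t)\}}+f\chi_{\{|f|\le f^*(t)\}}$ and use subadditivity. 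It then suffices to bound $\rho_A(Pf^*)$ and $\rho_A(Qf^*)$ separately by $\rho_{\mathcal A}(f)$.

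The $P$-term is handled directly by Theorem~\ref{th_1.1}: $\rho_A(Pf^*)=\rho_{\Gamma_{\mathcal A}}(f)\le 2\rho_{\mathcal A}(f)$. No extra hypothesis is needed here.

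The $Q$-term is where \eqref{cond_4.1} enters, via duality. Since $Q$ is the formal adjoint of $P$ on $\Rp$, I take $g\ge0$ and write
\[
\int_0^\infty (Qf^*)g=\int_0^\infty f^*\,(Pg)\le \int_0^\infty f^*\,(Pg^*),
\]
where the last inequality follows from the HLP principle. By the H\"older inequality for Orlicz spaces this is at most $2\rho_{\mathcal A}(f)\,\rho_{\widetilde{\mathcal A}}(Pg^*)$. Next I apply the Orlicz characterization \eqref{eq:OrliczChar}, rewritten via \eqref{eq:IBP}, to the pair $(\widetilde{\mathcal A},\widetilde A)$ in place of $(A,B)$. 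Condition \eqref{cond_4.1} bounds $t\int_0^t\widetilde{\mathcal A}(s)s^{-2}\,ds$ by $\widetilde A(Ct)$, and trivially $\widetilde{\mathcal A}(t)\le \widetilde A(t)$ because $b\ge a$ implies $b^{-1}\le a^{-1}$. Together these give $\widetilde{\mathcal A}(t)+t\int_0^t \widetilde{\mathcal A}(s)s^{-2}\,ds\le 2\widetilde A(Ct)\le \widetilde A(2Ct)$, so $P:L_{\widetilde A}\to L_{\widetilde{\mathcal A}}$ and hence $\rho_{\widetilde{\mathcal A}}(Pg^*)\le c\,\rho_{\widetilde A}(g)$. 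Remark~\ref{rem_3.2} allows $\widetilde{\mathcal A}$ to be replaced by its explicit equivalent $\int_0^t b^{-1}$ at the cost of constants. Taking the supremum over $g$ with $\rho_{\widetilde A}(g)\le1$ and using $\rho_A\le\rho'_{\widetilde A}$ gives $\rho_A(Qf^*)\le c\,\rho_{\mathcal A}(f)$.

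The main obstacle is the first step: establishing the pointwise rearrangement bound, including the a.e.\ existence of $T_Kf$ for $f\in L_{\mathcal A}$. The latter requires $\int|f|/(1+|y|^n)<\infty$, which should follow from $Pf^*<\infty$ and $Qf^*<\infty$ a.e. A secondary point is checking that the integrability hypothesis needed to apply \eqref{eq:OrliczChar} to $\widetilde{\mathcal A}$ holds; it is implied by \eqref{cond_4.1} itself, since that condition forces $\int_0^t\widetilde{\mathcal A}(s)s^{-2}\,ds<\infty$.
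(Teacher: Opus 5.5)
Your step one fails as you propose to carry it out. Splitting $f$ at height $f^*(t)$ is equivalent to using the $\K$-functional of $(L^1,L^2)$. Applying weak type $(1,1)$ of $T_K$ to the large part and $L^2$-boundedness to the small part only gives
\[
(T_Kf)^*(t)\lesssim (Pf^*)(t)+t^{-1/2}\Bigl(\int_t^\infty f^*(s)^2\,ds\Bigr)^{1/2}.
\]
The last term is not dominated by $(P+Q)f^*$. For $f^*=\chi_{(0,1)}$ it behaves like $t^{-1/2}$ as $t\to0^+$, whereas $(Pf^*+Qf^*)(t)=1+\ln(1/t)$.

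No argument based only on these two hypotheses can work. A bound $(Tf)^*\lesssim(P+Q)f^*$ forces boundedness on every $L^p$, $1<p<\infty$. Yet the rank-one operator $f\mapsto\bigl(\int_0^1 f\bigr)\psi$, with $\psi(x)=|x|^{-1/3}\chi_{(-1,1)}(x)$ on $\R$, is of weak type $(1,1)$ and bounded on $L^2$ but not on $L^3$.

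The $Q$-term needs control of $T_K$ near $L^\infty$, and that comes from the adjoint. The paper uses that $T'_K$ (Remark~\ref{rem_4.4}) is also of weak type $(1,1)$. The Appendix of \cite{C} then yields $\int_0^t(T_Kf)^*\le C\int_0^t(P+Q)f^*$, which together with the HLP principle is all you need. Your pointwise inequality is in fact true for Calder\'on--Zygmund operators. By the example above, though, its proof must use more than weak $(1,1)$ and $L^2$: either the adjoint estimate or kernel regularity via a good-$\lambda$ argument.

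The rest of your plan matches the paper. You handle the $P$-term by Theorem~\ref{th_1.1} and the $Q$-term by K\"othe duality, reducing it to $P:L_{\widetilde A}\to L_{\widetilde{\mathcal A}}$, which \eqref{cond_4.1} provides. Your duality computation is more explicit than the paper's and is correct up to one slip: $b\ge a$ is false in general, e.g.\ $a(s)=s^2$ gives $b(t)=t^2/2$. This is harmless, since monotonicity of $\widetilde{\mathcal A}$ gives, directly from \eqref{cond_4.1},
\[
\widetilde{\mathcal A}(t)\le 4t\int_t^{2t}\widetilde{\mathcal A}(s)s^{-2}\,ds\le 2\widetilde A(2Ct)\le\widetilde A(4Ct).
\]
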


\begin{proof}
It is shown in \cite[Theorems~5.4 and~5.5]{T} that both $T_K$ and its conjugate $T'_K$ are of weak type $(1,1)$ and map $L^2(\Rn)$ into itself. According to the result in the Appendix of \cite{C}, this implies
\[
\int_0^t(T_Kf)^*\le C \int_0^t (P+Q)f^*,
\]
with $C>0$ independent of $f\in L_{\mathcal A}(\Rn)$ and $t>0$. Here
\[
(Qg)(t)=\int_t^{\i} g(s)\,\frac{ds}{s}, \qquad g\in \M_+(\Rp),\ t>0,
\]
is the adjoint of $P$ (with respect to the $L^2(\Rp;ds)$ pairing). Hence, by the HLP principle stated in Section~\ref{sec:ri},
\begin{align*}
\rho_A(T_Kf)=\rho_A((T_Kf)^*)&\le C\rho_A((P+Q)f^*)\\
&\le C\bigl[\rho_A(Pf^*)+\rho_A(Qf^*)\bigr].
\end{align*}
By Theorem~\ref{th_1.1},
\[
\rho_A(Pf^*)\le C'\rho_{\mathcal A}(f^*)=C'\rho_{\mathcal A}(f).
\]
On the other hand, by K\"othe duality (Section~\ref{sec:ri}),
\[
\rho_A(Qf)\le C''\rho_{\mathcal A}(f) \quad\Longleftrightarrow\quad \rho_{\widetilde{\mathcal A}}(Pg)\le C''\rho_{\widetilde A}(g),
\]
and the condition \eqref{cond_4.1} guarantees the right-hand inequality via Theorem~\ref{th_1.1} applied to $\widetilde{\mathcal A}$ in place of $\mathcal A$. Altogether,
\[
\rho_A(T_Kf)\le C[C'+C'']\rho_{\mathcal A}(f).
\]
\end{proof}

\begin{remark}
The condition \eqref{cond_4.1} is the natural Boyd-type symmetric counterpart of \eqref{eq:OrliczChar} for the complementary function, and it is sharp in the sense that it is equivalent to the boundedness of $Q:L_{\mathcal A}\to L_A$. We do not address here the question of whether $L_{\mathcal A}$ is optimal among Orlicz domains for $T_K$ itself (as opposed to $P+Q$): unlike $M$, a CZ operator $T_K$ does not in general dominate $P$ pointwise on rearrangements. The optimal Orlicz domain question for $T_K$ has been studied by Cianchi \cite{ci99} under reflexivity assumptions.
\end{remark}

\section{An application to interpolation theory}\label{sec:interp}

Given Banach spaces $X_1$ and $X_2$ embedded in a common Hausdorff topological vector space $\mathcal H$, the $\K$-method of interpolation provides a concrete way to construct Banach spaces $X$ lying between them, in the sense that any linear operator $T$ with $T:X_i\to X_i$, $i=1,2$, satisfies $T:X\to X$.

The key element of the method is the Peetre $\K$-functional, defined at $x\in X_1+X_2$ and $t\in \Rp$ by
\[
\K(t,x;X_1,X_2):=\inf_{x=x_1+x_2}\left[\|x_1\|_{X_1}+t\|x_2\|_{X_2}\right].
\]
For our purposes, each interpolation space $X$ corresponds to an r.i.\ norm $\r$ on $\M_+(\Rp)$ with $\r(1/(1+t))<\i$; more specifically, the norm of $X$ is
\[
\|x\|_X:=\r\!\left(\frac{ \K(t,x;X_1,X_2)}{t}\right), \quad x\in X_1+X_2.
\]

The following is a special case of \cite[Theorem~7.2]{KMS} for $X_1$ and $X_2$ r.i.\ spaces and $\r=\ra$ an Orlicz norm. It elaborates, in a particular instance, the deep duality theorem of Brudnyi and Krugljak \cite{BK}.

\begin{theorem}\label{th_5.1}
Let $(\O,\mu)$ be a $\sigma$-finite measure space and suppose $\r_1$ and $\r_2$ are r.i.\ norms on $\M_+(\O)$. Assume further that $L_{\r_1'}(\O)\cap L_{\r_2'}(\O)$ is dense in $L_{\r_2'}(\O)$ and that
\[
\r_2'(\chi_{E_k})\downarrow 0 \quad\text{as}\quad E_k\downarrow\emptyset,\ E_k \subset \O.
\]
Consider a Young function $A(t)=\int_0^t a(s)\,ds$, $t\in\Rp$, satisfying $\int_{\Rp}A(k/(1+t))\,dt<\i$ for some $k>0$. Then the functional
\[
\r(f):=\ra\!\left(\frac{\K(t,f; L_{\r_1}(\O), L_{\r_2}(\O))}{t}\right), \quad f\in L_{\r_1}(\O)+L_{\r_2}(\O),
\]
is an r.i.\ norm on $\M_+(\O)$, and the r.i.\ space $\lr(\O)$ is an interpolation space between $L_{\r_1}(\O)$ and $L_{\r_2}(\O)$.

Moreover, if in addition $\int_{\Rp}\widetilde A(k/(1+t))\,dt<\i$ for some $k>0$, and if $\mathcal A$ is the Young function defined in \eqref{3.2} and $\widetilde{\mathcal A}$ is its complementary function, then
\[
\r'(g)\approx\r_{\widetilde{\mathcal A}}\!\left(\frac{d}{dt}\K(t,g; L_{\r_2'}(\O), L_{\r_1'}(\O))\right),\quad g\in L_{\r_2'}(\O)+L_{\r_1'}(\O).
\]
\end{theorem}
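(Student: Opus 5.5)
The plan is to take the general $\K$-method duality theorem \cite[Theorem~7.2]{KMS} as the source and check that our setting fits its hypotheses. The new content is the identification of the dual norm through Theorem~\ref{newth_1.2} (equivalently Corollary~\ref{cor_3.1}).

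\emph{Step 1: the first assertion.} I would show that $\rho_A$ is an r.i.\ norm on $\M_+(\Rp)$ with $\rho_A(1/(1+t))<\i$. The r.i.\ property is recorded in Section~\ref{sec:ri}. The finiteness is exactly the hypothesis $\int_{\Rp}A(k/(1+t))\,dt<\i$, since this gives $\rho_A(1/(1+t))\le \max(1,k^{-1})$ by convexity of $A$. Then \cite[Theorem~7.2]{KMS} applies with $\r=\ra$. Its hypotheses on $\r_1,\r_2$ (density of $L_{\r_1'}\cap L_{\r_2'}$ in $L_{\r_2'}$, and absolute continuity of $\r_2'$ on sets decreasing to $\emptyset$) are assumed verbatim. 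So $\r$ is an r.i.\ norm and $L_\r$ is an interpolation space; axioms (A1)--(A6) and rearrangement invariance are inherited from monotonicity and the r.i.\ nature of the $\K$-functional between r.i.\ spaces.

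\emph{Step 2: the dual formula.} The Brudnyi--Krugljak type duality in \cite{KMS} expresses $\r'(g)$ as the $\K$-norm of $g$ for the dual couple $(L_{\r_2'},L_{\r_1'})$. This norm is given by a functional of the form $\sigma\bigl(\tfrac{d}{dt}\K(t,g)\bigr)$, where $\sigma$ is the norm associated with $\ra'$ through $P$ and its adjoint $Q$. Concretely, I expect
\[
\r'(g)\approx \sup\left\{\int_{\Rp}\tfrac{d}{dt}\K(t,g)\,h(t)\,dt:\ \ra(Ph^*)\le 1\right\}
= \rho_{\Gamma_{\A}}'\!\left(\tfrac{d}{dt}\K(t,g)\right),
\]
with $\tfrac{d}{dt}\K(t,g)$ nonincreasing because $\K$ is concave. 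The second hypothesis, $\int_{\Rp}\widetilde A(k/(1+t))\,dt<\i$, is what makes the dual side well defined in \cite{KMS}. I will invoke it there rather than reprove it.

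\emph{Step 3: replace $\rho_{\Gamma_\A}$ by an Orlicz norm.} Corollary~\ref{cor_3.1} gives $\tfrac12\rho_{\Gamma_\A}\le\rho_\A\le\rho_{\Gamma_\A}$, and taking Köthe duals reverses this to $\rho_\A'\le\rho_{\Gamma_\A}'\le 2\rho_\A'$. Section~\ref{sec:ri} gives $\rho_{\widetilde\A}\le\rho_\A'\le2\rho_{\widetilde\A}$. Combining these yields $\r'(g)\approx\rho_{\widetilde\A}\bigl(\tfrac{d}{dt}\K(t,g;L_{\r_2'},L_{\r_1'})\bigr)$.

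The main obstacle is Step~2: matching precisely the form of the dual norm in \cite[Theorem~7.2]{KMS}, which is stated for a general r.i.\ $\r$ in terms of the level function or the $P$-optimal norm, with the functional $\rho_{\Gamma_\A}$ here. I would first try to verify that the \cite{KMS} expression for $\r'$ is literally the Köthe dual of $h\mapsto\ra(Ph^*)$ evaluated at the nonincreasing function $\tfrac{d}{dt}\K$. If instead it is phrased via $Q$ acting on $\ra'$, I would pass between the two forms using the $L^2$-adjointness of $P$ and $Q$ together with the HLP principle.
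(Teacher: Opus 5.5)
Your proposal is correct, modulo the cited duality theory. For the second assertion, though, it takes a different route from the paper.

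\textbf{The paper's route.} The paper gives no argument of its own. It takes the whole statement, including the identification of the dual parameter as $\rho_{\widetilde{\mathcal A}}$, as the specialization of \cite[Theorem~7.2]{KMS} to couples of r.i.\ spaces. That theorem is already an Orlicz-parameter result.

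\textbf{Your route.} You start instead from a general-parameter form of the Kerman--Milman--Sinnamon duality, namely $\r'(g)\approx\rho_{\Gamma_{\mathcal A}}'(k_g)$ with $k_g=\frac{d}{dt}\K(t,g;L_{\r_2'},L_{\r_1'})$. You then identify the dual parameter yourself, via Corollary~\ref{cor_3.1} and the Orlicz K\"othe duality $\rho_{\widetilde{\mathcal A}}\le\rho_{\mathcal A}'\le 2\rho_{\widetilde{\mathcal A}}$.

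\textbf{What your route buys.} It explains where $\mathcal A$ comes from. Since $\K(t,f)/t=(Pk_f)(t)$ with $k_f$ nonincreasing (when $\K(0^+,f)=0$), Theorem~\ref{th_1.1} shows that the $\K$-space with parameter $\ra$ is the $D$-space with parameter $\rho_{\mathcal A}$. Its dual is then naturally governed by $\rho_{\widetilde{\mathcal A}}$. The dual formula thus becomes a consequence of this paper's own Theorem~\ref{th_1.1} rather than an import.

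\textbf{What it costs.} Everything hinges on the general theorem in \cite{KMS} having exactly this ``down-dual'' form, which you still have to locate and check. If it is phrased as a supremum over nonincreasing $h$ with $\ra(Ph)\le1$, the extra identification is easy. Because $k_g$ is nonincreasing, the Hardy--Littlewood inequality gives $\int k_g h\le\int k_g h^*$. Also $\ra(P(h^*)^*)=\ra(Ph^*)$. So that supremum equals the K\"othe dual $\rho_{\Gamma_{\mathcal A}}'(k_g)$. If instead it is phrased through level functions, more work is needed. The paper's citation avoids this issue but imports the Orlicz computation wholesale.

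\textbf{Two slips, both harmless for $\approx$.}
\begin{itemize}
\item In Step~1 the bound should be $\ra(1/(1+t))\le\max(1,I)/k$, where $I=\int_{\Rp}A(k/(1+t))\,dt$. Your bound $\max(1,k^{-1})$ fails, for example, for $A(t)=Mt^2$, $k=1$ and $M>1$, where $\ra(1/(1+t))=\sqrt M$.
\item Dualizing Corollary~\ref{cor_3.1} gives $\tfrac12\rho_{\mathcal A}'\le\rho_{\Gamma_{\mathcal A}}'\le\rho_{\mathcal A}'$. The constants you wrote are reversed.
\end{itemize}
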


Now $\frac{d}{dt}\K(t,x;X_1,X_2)$ can be computed only when the $\K$-functional is known exactly. More often, the latter is only known up to constant multiples. The motivation behind Theorem~\ref{newth_1.2} is the following consequence of Theorem~\ref{th_5.1}. A version of this result involving further assumptions on the Young function $A$ is given in \cite[Theorem~8.2]{KMS}.

\begin{theorem}\label{th_5.2}
Let $\O$, $\r_1$, $\r_2$, $A$, $\r$ and $\mathcal A$ be as in Theorem~\ref{th_5.1}, with $a(t)$ absolutely continuous. Define the increasing function $C$ by
\[
C(t):=\int_0^t c(s)\,ds,
\]
where
\[
c(t):=\widetilde{\mathcal A}'(t) - \frac{\widetilde{\mathcal A}(t)}{t}=\frac{1}{t} \int_0^t s\,\widetilde{\mathcal A}''(s)\,ds, \quad t\in \Rp.
\]
Then, provided $\int_{\Rp}C(k/(1+t))\,dt<\i$ for some $k>0$, one has $\rho_{\widetilde{\mathcal A}}\approx \rho_{\Gamma_C}$, and so
\[
\r'(g)\approx \inf\left\{\l>0: \int_{\Rp} C\!\left(\frac{\K(t,g; L_{\r_2'}(\O), L_{\r_1'}(\O))}{\l t}\right)dt\le 1\right\},
\]
for $g\in L_{\r_2'}(\O)+ L_{\r_1'}(\O)$. In particular, $g\in L_{\r_2'}(\O)+ L_{\r_1'}(\O)$ belongs to $L_{\r'}(\O)$ if and only if there exists $\l_g\in \Rp$ such that
\[
\int_{\Rp} C\!\left(\frac{\K(t,g; L_{\r_2'}(\O), L_{\r_1'}(\O))}{\l_g t}\right)dt<\i.
\]
\end{theorem}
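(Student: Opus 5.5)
The plan is to combine the dual-norm formula of Theorem~\ref{th_5.1} with Theorem~\ref{newth_1.2}, the latter applied to the Young function $\widetilde{\mathcal A}$ in place of $A$.

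\emph{Step 1: identify the function $C$ with that of Theorem~\ref{newth_1.2}.} The density of $\widetilde{\mathcal A}$ is $(\mathcal A')^{-1}$. We need it to be absolutely continuous. If it is not, we first replace $\widetilde{\mathcal A}$ by $\int_0^t \widetilde{\mathcal A}(s)\,ds/s$. By Remark~\ref{rem_1.3}(1) this changes the Luxemburg norm by at most a factor $2$, and the new density is absolutely continuous. The function $c$ in the statement is then exactly the $c$ of Theorem~\ref{newth_1.2} built from $\widetilde{\mathcal A}$. The hypothesis $\int_{\Rp} C(k/(1+t))\,dt<\infty$ is exactly \eqref{1.1}. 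Theorem~\ref{newth_1.2} therefore gives
\[
\tfrac12\rho_{\Gamma_C}(h)\le \rho_{\widetilde{\mathcal A}}(h)\le \rho_{\Gamma_C}(h)
\]
for every $h\in\M(\Rp)$. This is the asserted equivalence $\rho_{\widetilde{\mathcal A}}\approx\rho_{\Gamma_C}$.

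\emph{Step 2: apply this with $h$ the derivative of the $\K$-functional.} Let $h(t)=\frac{d}{dt}\K(t,g;L_{\r_2'},L_{\r_1'})$. The $\K$-functional is nonnegative, concave and nondecreasing, so $h$ is nonnegative and nonincreasing. Hence $h=h^*$ on $\Rp$. If $\K(0^+,g)=0$, then
\[
t^{-1}\int_0^t h^*(s)\,ds=\frac{\K(t,g)}{t}.
\]
Substituting this into the definition of $\rho_{\Gamma_C}(h)$ gives exactly the infimum displayed in the theorem. Theorem~\ref{th_5.1} gives $\r'(g)\approx\rho_{\widetilde{\mathcal A}}(h)$, and Step 1 converts this into the stated formula.

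\emph{Step 3: the boundary term.} The main obstacle is the case $\K(0^+,g)\neq0$. If $\K(0^+,g)=\kappa>0$, then $\K(t,g)/t\ge\kappa/t$. This makes $\int C(\K/(\lambda t))\,dt$ diverge near $0$ unless $C$ vanishes near the origin. On the other side, $g$ with $\K(0^+,g)>0$ means $g$ has a nonzero component not in the closure of $L_{\r_2'}\cap L_{\r_1'}$ in $L_{\r_2'}$ (for this ordering of the couple). I would use the density hypothesis of Theorem~\ref{th_5.1}, together with the absolute-continuity hypothesis $\r_2'(\chi_{E_k})\downarrow0$, to show $\K(0^+,g)=0$ for all relevant $g$. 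Failing that, I would check that both sides are simultaneously infinite, matching the convention implicit in Theorem~\ref{th_5.1}.

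\emph{Step 4: the membership criterion.} For the final assertion, monotonicity in $\lambda$ shows that finiteness of the modular for some $\lambda_g$ implies it is $\le1$ for a larger $\lambda$. For this I would use the dominated or monotone convergence theorem as $\lambda\to\infty$, noting $C(\K/(\lambda t))\downarrow0$ and that the modular is finite at $\lambda_g$. Hence $\r'(g)<\infty$. The converse is immediate from the definition of the infimum.
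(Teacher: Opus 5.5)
Your proposal is correct and is essentially the argument the paper leaves implicit: apply Theorem~\ref{newth_1.2} to $\widetilde{\mathcal A}$, then use that $h=\frac{d}{dt}\K(t,g)$ is nonincreasing with $\int_0^t h=\K(t,g)$. Here $\K(0^+,g)=0$ follows, as you suspected, directly from the density hypothesis of Theorem~\ref{th_5.1}: write $g=g_0+g_1$ and approximate $g_0$ in $L_{\r_2'}$ by some $u\in L_{\r_1'}\cap L_{\r_2'}$, so that $\K(t,g)\le \r_2'(g_0-u)+t\,\r_1'(g_1+u)$.

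Discard the fallback in Step~1. Replacing $\widetilde{\mathcal A}$ by $\int_0^t\widetilde{\mathcal A}(s)\,ds/s$ would change $C$, so it would not prove the statement as written. It is also never needed: absolute continuity of $a$ makes $\mathcal A'$ locally absolutely continuous with $\mathcal A''\ge a(t)/t>0$ a.e., so its inverse $(\mathcal A')^{-1}$, the density of $\widetilde{\mathcal A}$, is locally absolutely continuous too.
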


\begin{remark}\label{rem_5.3}
We make explicit the relationship of Theorem~\ref{th_5.2} to two earlier results.

\smallskip\noindent\emph{Improvement over \cite[Theorem~8.2]{KMS}.} The earlier result \cite[Theorem~8.2]{KMS} establishes an equivalent (more computable) form of $\r'$ under additional regularity assumptions on the Young function~$A$. The novelty of Theorem~\ref{th_5.2} is that we obtain the analogous expression for $\r'$ \emph{requiring only} the single integrability condition $\int_{\Rp} C(k/(1+t))\,dt<\i$. This is a strictly weaker hypothesis on~$A$, and is precisely the flexibility that the application below needs.

\smallskip\noindent\emph{Use in \cite{KP}.} The expression in Theorem~\ref{th_5.2} depends on the $\K$-functional only through the integrand $C(\K/(\l t))$, which is insensitive to multiplicative perturbations of $\K$ (after absorbing constants into $\l$). This means Theorem~\ref{th_5.2} applies even when the $\K$-functional is only known up to constant multiples --- the typical situation in concrete cases. It is in this form that the underlying duality is used in the proof of \cite[Theorem~6.3]{KP} to characterize the optimal r.i.\ embedding space of an Orlicz--Sobolev space.
\end{remark}

\end{document}